 \theoremstyle{plain}
 \newtheorem{theorem}[subsection]{Theorem}
 \newtheorem{proposition}[subsection]{Proposition}
 \theoremstyle{definition}
 \theoremstyle{remark}
 \theoremstyle{plain} 
\newcommand{\thistheoremname}{}
\newtheorem{genericthm}[subsection]{\thistheoremname}
  \newtheorem*{genericthm*}{\thistheoremname}
\newenvironment{namedthm*}[1]
  {\renewcommand{\thistheoremname}{#1}%
   \begin{genericthm*}}
  {\end{genericthm*}}
\newcommand{\spec}{\operatorname{Spec}}
\newcommand{\hilb}{\operatorname{Hilb}}
\newcommand{\supp}{\operatorname{Supp}}
\newcommand{\galois}{\operatorname{Gal}}
\newcommand{\sing}{\operatorname{sing}}
\newcommand{\sdeg}{\operatorname{sdeg}}
\newcommand{\ssdeg}{\operatorname{ssdeg}}
\newcommand{\Mod}[1]{\ (\mathrm{mod}\ #1)}
\newcommand{\nor}{\operatorname{nor}}
\newcommand{\bdiv}{\operatorname{b}}
\newcommand{\mni}{\medskip\noindent}
\newcommand{\mbb}{\mathbb}
\newcommand{\QQ}{\mbb{Q}}
\newcommand{\NN}{\mbb{N}}
\newcommand{\ZZ}{\mbb{Z}}
\newcommand{\RR}{\mbb{R}}
\newcommand{\AAA}{\mbb{A}}
\newcommand{\PP}{\mbb{P}}
\newcommand{\mc}{\mathcal}
\newcommand{\mcO}{\mc{O}}
\newcommand{\mf}{\mathfrak}
\newcommand{\mfm}{\mathfrak{m}}
\newcommand{\mb}{\mathbf}
\newcommand{\wht}{\widehat}
\newcommand{\wh}{\widehat}
\newcommand{\ol}{\overline}
\newcommand*{\da@rightarrow}{\mathchar"0\hexnumber@\symAMSa 4B }
\newcommand*{\da@leftarrow}{\mathchar"0\hexnumber@\symAMSa 4C }
\newcommand*{\xdashrightarrow}[2][]{%
  \mathrel{%
    \mathpalette{\da@xarrow{#1}{#2}{}\da@rightarrow{\,}{}}{}%
  }%
}
\newcommand{\xdashleftarrow}[2][]{%
  \mathrel{%
    \mathpalette{\da@xarrow{#1}{#2}\da@leftarrow{}{}{\,}}{}%
  }%
}
\newcommand*{\da@xarrow}[7]{%
  \sbox0{$\ifx#7\scriptstyle\scriptscriptstyle\else\scriptstyle\fi#5#1#6\m@th$}%
  \sbox2{$\ifx#7\scriptstyle\scriptscriptstyle\else\scriptstyle\fi#5#2#6\m@th$}%
  \sbox4{$#7\dabar@\m@th$}%
  \dimen@=\wd0 %
  \ifdim\wd2 >\dimen@
    \dimen@=\wd2 %
  \fi
  \count@=2 %
  \def\da@bars{\dabar@\dabar@}%
  \@whiledim\count@\wd4<\dimen@\do{%
    \advance\count@\@ne
    \expandafter\def\expandafter\da@bars\expandafter{%
      \da@bars
      \dabar@ 
    }%
  }%
  \mathrel{#3}%
  \mathrel{%
    \mathop{\da@bars}\limits
    \ifx\\#1\\%
    \else
      _{\copy0}%
    \fi
    \ifx\\#2\\%
    \else
      ^{\copy2}%
    \fi
  }%
  \mathrel{#4}%
}
\title{\large S\MakeLowercase{tein degree on non}-F\MakeLowercase{ano type fibrations}}
\thanks{2020 MSC:
13B40, 
14B05, 
14D06, 
14J20, 
14J27, 
14J45. 
}
\author{\large C\MakeLowercase{aucher} B\MakeLowercase{irkar} \MakeLowercase{and} \large S\MakeLowercase{antai} Q\MakeLowercase{u}}
\date{\today}
\begin{document}

\begin{abstract}
    We construct examples showing that Stein degree of vertical divisors 
    on non-Fano type log Calabi-Yau fibrations is unbounded.
\end{abstract}

\maketitle

\tableofcontents

\addtocontents{toc}{\protect\setcounter{tocdepth}{1}}


\section{Introduction}

We work over an algebraically closed field $\mbb K$ of characteristic zero unless stated otherwise.
The goal of this paper is to 
establish the unboundedness of Stein degree for vertical divisors on
non-Fano type log Calabi-Yau fibrations; see Theorem~\ref{exa-cor-d-n-thm-in-intro}.


\mni
{\textbf{\sffamily{Stein degree on log Calabi-Yau fibrations.}}}
Stein degree measures the number of connected components of general fibres of a projective morphism.
Specifically, let $S\to Z$ be a projective morphism between varieties, and
let $S\to V\to Z$ be the Stein factorisation.  
The \emph{Stein degree} of $S$ over $Z$ is defined to be 
\[ \sdeg (S/Z) \coloneqq \deg (V/Z). \]
If $S\to Z$ is not surjective, this degree is 0 by convention; see \cite[\S 3]{B-moduli}.
In this case,
the \emph{strong Stein degree} of $S$ over $Z$ is defined to be 
\[ \ssdeg (S/Z) \coloneqq \sdeg(S/C), \]
where $C$ is the image of $S$ in $Z$.  

A \emph{log Calabi-Yau fibration} $(X, B)\to Z$ consists of a log canonical (lc) pair $(X, B)$ and 
a contraction $X\to Z$ of varieties such that $K_X + B \sim_{\RR} 0/Z$; see \cite[\S 1]{B-moduli}.
Boundedness of Stein degree of lc centres on log Calabi-Yau fibrations is a
crucial ingredient for the existence of moduli space of 
semi-log canonical (slc) stable minimal models \cite{B-moduli}.

\begin{theorem}[\protect{\cite[Theorem~1.15]{B-moduli}}]\label{B-moduli-bnd-S-degree}
    Let $d\in \NN$.  Let $(X, B)\to Z$ be a log Calabi-Yau fibration of dimension $d=\dim X$.
    Then $\sdeg (I/Z)$ is bounded from above depending only on $d$ for every horizontal$/Z$ lc centre 
    $I$ of $(X, B)$.
\end{theorem}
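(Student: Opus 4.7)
The aim is to reduce, via dlt modification and adjunction to $I$, to bounding the degree of a finite cover of $Z$ that arises from the canonical bundle formula on a lower-dimensional log Calabi-Yau fibration, and then to apply Birkar's boundedness of complements together with Kollár's theory of sources.

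First, I would replace $(X,B)$ by a dlt modification $(X',B')\to(X,B)$ and $I$ by the birational transform of an appropriate dlt stratum on $X'$; since dlt modifications are birational, $\sdeg(I/Z)$ is unchanged. We may therefore assume $(X,B)$ is dlt and $I$ is an irreducible component of an intersection of components of $\lfloor B\rfloor$. Applying Kollár's adjunction to the dlt stratum $I$, the normalisation $\nu\colon I^\nu\to I$ carries an effective boundary $B_{I^\nu}$ such that $(I^\nu,B_{I^\nu})$ is lc and
\[
K_{I^\nu}+B_{I^\nu}\sim_{\RR} 0/Z.
\]
Since $\nu$ is finite and birational, $\sdeg(I/Z)=\sdeg(I^\nu/Z)$. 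Taking the Stein factorisation $I^\nu\to V\xrightarrow{\pi}Z$, the morphism $(I^\nu,B_{I^\nu})\to V$ is a log Calabi-Yau fibration of total dimension strictly less than $d$, and $\pi$ is finite surjective with $\deg\pi=\sdeg(I/Z)$. The task reduces to bounding $\deg\pi$.

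Next, I would apply the canonical bundle formula to the log Calabi-Yau contraction $(I^\nu,B_{I^\nu})\to V$ in order to equip $V$ with a generalised log Calabi-Yau pair structure $(V,B_V+M_V)$ satisfying $K_V+B_V+M_V\sim_{\RR} 0/Z$. Using strong induction on $d$ together with Birkar's boundedness of complements for generalised log Calabi-Yau pairs in dimension $<d$, there exists an integer $N=N(d)$ bounding the index and the coefficients of this generalised pair. Combined with the fact that $\pi$ is finite and that the relative canonical class of $\pi$ is controlled by $B_V$ via the ramification formula, this yields explicit control on the ramification of $\pi$.

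The main obstacle is converting the numerical data above into an actual bound on $\deg\pi$. Here I would invoke Kollár's theory of sources and springs for lc centres of log Calabi-Yau pairs, which realises $\pi\colon V\to Z$ (possibly after a bounded étale modification) as a canonical Galois cover whose group embeds as a subquotient of a finite automorphism group of a lower-dimensional Calabi-Yau type pair of dimension $<d$. Boundedness of the order of such automorphism groups follows from BAB-type results applied inductively, producing the desired bound on $\sdeg(I/Z)$ in terms of $d$ alone.
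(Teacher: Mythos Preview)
The paper does not prove this statement at all: Theorem~\ref{B-moduli-bnd-S-degree} is quoted verbatim from \cite[Theorem~1.15]{B-moduli} as background motivation, and no argument for it appears anywhere in the present paper. There is therefore nothing in the paper to compare your proposal against.

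As for the proposal itself, the overall architecture (dlt modification, adjunction to the stratum, Stein factorisation, canonical bundle formula, and an appeal to Koll\'ar's source/spring formalism) is indeed the shape of the argument in \cite{B-moduli}. However, the final step as you have written it is not a proof. Two specific issues: first, the sentence ``the relative canonical class of $\pi$ is controlled by $B_V$ via the ramification formula'' does not by itself bound $\deg\pi$, because you have not equipped $Z$ with any boundary to compare against; one needs the canonical bundle formula on $(X,B)\to Z$ as well, and a compatibility statement between the two generalised pair structures on $V$ and $Z$. Second, and more seriously, the assertion that the Galois group of $V\to Z$ ``embeds as a subquotient of a finite automorphism group of a lower-dimensional Calabi--Yau type pair'' and that ``boundedness of the order of such automorphism groups follows from BAB-type results'' is exactly the hard content of the theorem, and BAB does not directly bound birational automorphism groups of klt Calabi--Yau pairs. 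In \cite{B-moduli} this step is handled by a careful combination of the $\PP^1$-link structure, boundedness of complements, and an inductive volume/degree argument specific to that paper; your sketch names the right ingredients but does not explain how they combine to produce the bound.
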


In light of the importance of Stein degree, the first author formulates
several questions on boundedness of Stein degree of divisors on log Calabi-Yau fibrations;
see \cite[\S 1]{B-moduli}.  We have confirmed \cite[Conjecture~1.16]{B-moduli}
in the setting of generalised pairs \cite{birkar-Q-25}.
A \emph{generalised log Calabi-Yau fibration} $(X, B + \mb{M}) \to Z$ consists of a generalised log canonical 
generalised pair $(X, B + \mb{M})$ and a contraction of varieties $X\to Z$ 
such that $K_X + B + \mb{M}_X \sim_{\RR}0/Z$.
For conventions of $\bdiv$-divisors and singularities of generalised pairs, see
\cite{Corti-3-folds, B-Zhang, B-Fano} (see also \cite[\S 2]{birkar-Q-25}).

\begin{theorem}[\protect{\cite[Theorem~1.3]{birkar-Q-25}}]\label{dim-d-t}
    Let $d\in \NN$ and let $t\in (0,1]$.
	Let $(X, B + \mb{M})\to Z$ be a generalised log Calabi-Yau fibration 
	of relative dimension $d$, that is, a general fibre of $X\to Z$ has dimension $d$.
    Let $S$ be a horizontal$/Z$ irreducible component of $B$ 
	whose coefficient in $B$ is $\ge t$.
	Then $\sdeg (S^{\nor}/Z)$ is bounded from above depending only on $d,t$, 
    where $S^{\nor}$ is the normalisation of $S$.
\end{theorem}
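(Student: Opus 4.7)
The plan is to reduce Theorem~\ref{dim-d-t} to the ordinary-pair statement of Theorem~\ref{B-moduli-bnd-S-degree} via two conceptual moves: a base-change reduction to the case $Z = \mathrm{pt}$, and a modification of the generalised pair that realises $S$ as a horizontal lc centre of an ordinary log Calabi--Yau pair.

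\textbf{Step 1 (Reduction to a point base).} First I would base-change to the geometric generic point of $Z$ and spread out. Since $\sdeg(S^{\nor}/Z)$ equals the number of connected components of a general fibre of $S^{\nor}\to Z$, and since for general $F$ one has $(S|_F)^{\nor} = S^{\nor}|_F$, the problem reduces to bounding the number of irreducible components of $S|_F$ on a general fibre $F$. Thus we may assume $Z$ is a point: $(X, B+\mathbf{M})$ is a generalised log Calabi--Yau of dimension $d$, $S\subseteq B$ is irreducible of coefficient $a_S\geq t$, and we must bound the number of connected components of $S^{\nor}$ in terms of $d$ and $t$.

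\textbf{Step 2 (Promoting $S$ to an lc centre, and descending $\mathbf{M}$).} Two obstacles prevent a direct appeal to Theorem~\ref{B-moduli-bnd-S-degree}: the coefficient $a_S$ may be strictly less than $1$, and $\mathbf{M}$ need not be trivial. For the first, I would invoke the theory of bounded generalised complements: find $N=N(d,t)$ and $B^+\geq B$ with $(X,B^+ +\mathbf{M})$ generalised log Calabi--Yau and $\mult_S B^+ = 1$, the lower bound $a_S\geq t$ providing the room needed to boost the coefficient of $S$ to $1$ with denominator controlled by $d,t$. For the second, I would pass to a suitable higher birational model on which $\mathbf{M}$ descends as a genuine nef $\mathbb{R}$-divisor and apply a canonical bundle formula to reinterpret $(X,B^+ +\mathbf{M})$ as an ordinary log Calabi--Yau fibration over an auxiliary base $W$, with $S$ still appearing as a horizontal lc centre. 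Theorem~\ref{B-moduli-bnd-S-degree} then bounds $\sdeg(S^{\nor}/W)$ by a constant depending only on $d$, and tracing this bound back through the reduction yields the desired bound in terms of $d$ and $t$.

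\textbf{Main obstacle.} The hardest part is Step~2: constructing a generalised complement whose denominator depends only on $d$ and $t$, and arranging that it interacts cleanly with the descent of $\mathbf{M}$ and the canonical bundle formula. This uses the full strength of Birkar-type boundedness of complements for generalised pairs together with a careful LMMP argument to pass to a model where $\mathbf{M}$ descends and the lc centre $S$ is preserved. In the absence of Fano-type hypotheses on $X/Z$, the effectivity of these ingredients is subtle; it is precisely the coefficient lower bound $t$ that supplies the positivity needed to run the argument, and the examples constructed later in the paper show that such a bound fails entirely once this lower bound (or a suitable replacement) is dropped.
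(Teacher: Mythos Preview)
The paper does not prove Theorem~\ref{dim-d-t}: it is quoted verbatim from \cite[Theorem~1.3]{birkar-Q-25}, and no argument for it appears here. So there is no ``paper's own proof'' to compare your proposal against. What the paper \emph{does} say about the proof is that Theorem~\ref{vertical-bnd-intro} --- boundedness of strong Stein degree for \emph{vertical} divisors on \emph{Fano type} fibrations --- is a key ingredient. Your sketch makes no use of that result and instead tries to route everything through Theorem~\ref{B-moduli-bnd-S-degree}, so already at the level of architecture your approach diverges from the one indicated.

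On its own terms, your Step~1 is fine: restricting to a general fibre reduces the question to bounding the number of irreducible components of $S|_F$ inside a $d$-dimensional generalised log Calabi--Yau pair. Step~2, however, is where the real content lies, and the sketch is too loose to be convincing. Two specific issues: first, ``boosting the coefficient of $S$ to $1$ via a bounded complement'' must be done for \emph{every} component of $S|_F$ simultaneously, and the number of such components is precisely the unknown you are trying to bound --- so you cannot simply assume a complement with controlled denominator exists without a circularity. Second, the phrase ``apply a canonical bundle formula to reinterpret $(X,B^+ + \mathbf{M})$ as an ordinary log Calabi--Yau fibration over an auxiliary base $W$'' reverses the usual direction of the canonical bundle formula (which produces a generalised pair on the base of a fibration, not the other way around) and does not explain how the nef part $\mathbf{M}$ is absorbed; without $\mathbf{M}$ being semiample or otherwise realisable as an effective divisor, there is no evident way to land in the setting of Theorem~\ref{B-moduli-bnd-S-degree}.
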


Recall that a contraction of varieties $X\to Z$ is called \emph{of Fano type} 
if there exists a big$/Z$ $\RR$-divisor $\Gamma$ such that $(X, \Gamma)$ is Kawamata log terminal (klt)
and $K_X + \Gamma \sim_{\RR}0/Z$.
A \emph{Fano type generalised log Calabi-Yau fibration} (or a \emph{Fano type fibration} for short) 
is a generalised log Calabi-Yau fibration $(X, B + \mb{M})\to Z$, where $X$ is of Fano type over $Z$.
A key ingredient in the proof of Theorem~\ref{dim-d-t} is the following boundedness
result on strong Stein degree of vertical divisors on Fano type fibrations.

\begin{theorem}[\protect{\cite[Theorem~1.4]{birkar-Q-25}}]\label{vertical-bnd-intro}
    Let $d\in \NN$ and let $t \in (0,1]$.  Let 
    $(X, B + \mb{M})\to Z$ 
    be a Fano type generalised log Calabi-Yau fibration
    of relative dimension $d$.  
    Let $S$ be a vertical$/Z$ irreducible component of $B$ such that
    \begin{itemize} 
        \item [\emph{(1)}] the coefficient of $S$ in $B$ is $\ge t$, and
        \item [\emph{(2)}] the image of $S$ in $Z$ has codimension one in $Z$.
    \end{itemize}
    Then $\ssdeg (S^{\nor}/Z)$ is bounded from above depending only on $d, t$,
    where $S^{\nor}$ is the normalisation of $S$.
\end{theorem}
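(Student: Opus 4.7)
The plan is to reduce, by cutting the base with general hyperplane sections, to the case where $\dim Z = 1$, and then to use boundedness of complements together with the canonical bundle formula to bound the irreducible components of the special fibre.

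For the reduction, fix a general closed point $c \in C$ and take $Z' \subset Z$ to be a smooth curve cut out by $\dim Z - 1$ general very ample divisors, chosen so that $Z'$ meets $C$ transversally at $c$. Set $X' := \pi^{-1}(Z')$, $\pi' := \pi|_{X'}$, $B' := B|_{X'}$, and let $\mb{M}'$ be the restricted $\bdiv$-divisor. By standard Bertini-type arguments, $(X', B' + \mb{M}') \to Z'$ remains a Fano type generalised log Calabi-Yau fibration of relative dimension $d$, and $S' := S|_{X'}$ is an effective vertical$/Z'$ divisor on $X'$ whose irreducible components have coefficient $\geq t$ in $B'$. By the transversality of the cut and the definition of the Stein factorisation, the number of irreducible components of $S'$ lying over $c$ equals $\sdeg (S^{\nor}/C)$; it therefore suffices to prove the theorem when the base is a curve and $C = \{c\}$ is a point, bounding the number of components of $S$ over $c$.

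In the curve base case, invoke boundedness of $\RR$-complements for Fano type generalised log Calabi-Yau fibrations of bounded relative dimension: there exist $n = n(d) \in \NN$ and $B^+ \geq B$ with $(X, B^+ + \mb{M})$ generalised log canonical and $n(K_X + B^+ + \mb{M}_X) \sim 0$ over a neighbourhood of $c$. Combined with the canonical bundle formula
\[
K_X + B + \mb{M}_X \sim_{\RR} \pi^*(K_Z + B_Z + \mb{M}_Z),
\]
the lower bound $t$ on the coefficient of each component of $S$ in $B$ forces the coefficient of $c$ in $B_Z$ to be bounded below by a positive function of $t$ via the discriminant calculation, while the $n$-integrality constrains the multiplicities of the components of $\pi^{-1}(c)$ in the fibre divisor.

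To upgrade these numerical constraints to a bound on the \emph{number} of components of $\pi^{-1}(c)$ with coefficient $\geq t$ in $B$, one exploits boundedness of the generic fibre: it is a Fano type generalised log Calabi-Yau variety of dimension $d$ whose boundary coefficients lie in a DCC set by the $n$-complement, hence lies in a bounded family by BAB. Spreading this bounded family to a neighbourhood of $c$ and running a relative MMP separates the high-coefficient vertical components of the special fibre on a model with bounded Picard number, yielding a bound on their number depending only on $d$ and $t$. The principal obstacle is this spreading step: one must extend the bounded family of generic fibres to a bounded family of models over a neighbourhood of $c$ in a way that retains control of every high-coefficient vertical divisor, and this is the technical crux of the argument.
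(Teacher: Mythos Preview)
The paper does not contain a proof of this statement: Theorem~\ref{vertical-bnd-intro} is quoted verbatim from \cite[Theorem~1.4]{birkar-Q-25} and is used only as background to motivate the present paper's main result (Theorem~\ref{exa-cor-d-n-thm-in-intro}), which shows that the Fano type hypothesis cannot be dropped. There is therefore nothing in this paper to compare your proposal against.

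As to the proposal itself, taken on its own terms: the reduction to a one-dimensional base by general hyperplane sections is standard and fine, and the invocation of bounded complements and the canonical bundle formula is the natural toolkit. However, the final paragraph is not a proof but a description of where the difficulty lies. You explicitly identify the ``spreading step'' --- extending boundedness of the generic fibre to control of the special fibre --- as the technical crux, and then stop. Bounding the Picard number of a model of the generic fibre does not by itself bound the number of vertical components over a special point; that passage requires genuine work (and is, presumably, what \cite{birkar-Q-25} actually supplies). As written, your argument establishes numerical constraints on multiplicities and discriminant coefficients but does not close the gap to a bound on the number of components, so it is incomplete rather than wrong.
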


Note that Theorem~\ref{vertical-bnd-intro} also answers a question proposed on \cite[page 8]{B-moduli},
which asks the boundedness of strong Stein degree of vertical divisors on log Calabi-Yau fibrations.


\medskip

\mni
{\textbf{\sffamily{Main result.}}} 
In this paper, we give examples showing that 
the strong Stein degree of vertical divisors is unbounded if $X\to Z$ is not of Fano type in
Theorem~\ref{vertical-bnd-intro}, even for $t=1$.

\begin{theorem}\label{exa-cor-d-n-thm-in-intro}
    Let $\mbb K$ be an algebraically closed field of characteristic zero.
    Let $n, d\in \NN$ such that $d\mid n$ and $d\neq n$.
    There exists a log Calabi-Yau fibration $(X, B)\to Z$ such that
    \begin{itemize}
        \item $X$ is a normal projective threefold over $\mbb K$,
        \item $Z$ is a normal projective surface over $\mbb K$,
        \item a general fibre of $X\to Z$ is a nonsingular genus one curve over $\mbb K$, and
        \item there is an irreducible curve $C$ on $Z$ such that 
        \begin{itemize}
            \item $B$ has $n/d$ horizontal$/C$ irreducible components,
            \item every horizontal$/C$ irreducible component of $B$ has coefficient one in $B$, and
            \item for every horizontal$/C$ irreducible component $S$ of $B$, $\sdeg (S^{\nor}/C) = d$, where $S^{\nor}$ is the normalisation of $S$.
        \end{itemize}
    \end{itemize}
\end{theorem}


\mni
{\textbf{\sffamily{Sketch of the construction.}}}
The proof of Theorem~\ref{exa-cor-d-n-thm-in-intro} relies on the arithmetic results of 
\cite{LLR04} concerning reduction types of genus one curves.
In particular, for every $d\in \NN$, 
\cite{LLR04} constructs an arithmetic surface $g\colon X\to \spec \wh{R}$ (see \S\ref{elements-ari-surfaces}), where
\begin{itemize}
    \item $X$ is a regular scheme of dimension 2,
    \item $\wh{R}$ is the completion of a discrete valuation ring $R$,
    \item the residue field $k$ of $R$ admits a cyclic Galois extension $k'/k$ of degree $d$,
    \item the geometric generic fibre of $g$ is a smooth projective genus one curve, and
    \item the special fibre $X_k\to \spec k$ of $g$ is reduced and is a cycle of $\PP^1_{k'}$. 
\end{itemize}
See Proposition~\ref{ext-over-completion} for the detailed statement.
Let $K_X$ be the canonical divisor of $X$.  Then 
$K_X + X_k\sim 0/Z$, and $(X, X_k)$ is log canonical; cf. \cite[\S 2.1]{Kol_singularities_of_MMP}.
If $S_k$ denotes an irreducible component of $X_k$, 
then passing to the algebraic closure $\ol{k}$,
the base change $S_k\times_{\spec k}\spec \ol{k}$ splits into a disjoint union of $d$ projective lines over $\ol{k}$, 
hence $\sdeg (S_k/\spec k) = d$.

However, the construction above takes place over the Dedekind scheme $\spec \wh{R}$,
which is not of finite type over an algebraically closed field, 
so it does not directly give a log Calabi-Yau fibration.
To overcome this, we proceed in three stages.

\medskip

\emph{Step 1}. Let $R$ be the localisation ring of $\AAA_{\mbb K}^2$ at the generic point of a line.
Then $R$ is a discrete valuation ring that is
essentially of finite type over $\mbb K$.
Using Artin's algebraic approximation theory \cite{Artin},
we approximate the arithmetic surface $X\to \spec \wh{R}$ by a fibred surface $X'\to \spec R'$ 
(see \S\ref{elements-ari-surfaces}), where $R'$ is a discrete valuation ring admitting an \'etale
homomorphism $R\to R'$, and the residue field of $R'$ is isomorphic to that of $R$.
The approximation process is chosen in a certain careful way
to ensure that the singularity and log Calabi-Yau properties are preserved;
for more details, see Theorem~\ref{exa-thm-d-n}.

\medskip

\emph{Step 2}. Since $R'$ is also essentially of finite type over $\mbb K$,
it can be realised as the localisation of 
a nonsingular quasi-projective surface $Z^{\circ}$ 
at the generic point of a nonsingular curve $C^{\circ}\subset Z^{\circ}$.
Thus, we can assume that $X'\to \spec R'$ spreads out to 
a log Calabi-Yau fibration $(X^{\circ}, B^{\circ})\to Z^{\circ}$
satisfying the properties in Theorem~\ref{exa-cor-d-n-thm-in-intro}.
This process is formalised in Theorem~\ref{exa-cor-d-n},
which can be viewed as a quasi-projective version of Theorem~\ref{exa-cor-d-n-thm-in-intro}.

\medskip

\emph{Step 3}. Finally, we take a log canonical projective compactification $(\ol{X}, \ol{B})\to \ol{Z}$
of $(X^{\circ}, B^{\circ})\to Z^{\circ}$ following \cite{Hacon-Xu-closure}.
After running an MMP, the resulting family $(X, B)\to Z$ is 
a log Calabi-Yau fibration $(X, B)\to Z$ with $X, Z$ projective,
which completes the proof of Theorem~\ref{exa-cor-d-n-thm-in-intro}.
This is treated in Theorem~\ref{proj-exa-cor-d-n}.


\medskip

\mni
{\textbf{\sffamily{Acknowledgements.}}}
The first author was supported by a grant from Tsinghua University and 
a grant of the National Program of Overseas High Level Talent. 
We thank Qing Liu for answering our questions about the results in \cite{LLR04}. 
We thank the anonymous referees for many helpful comments to improve this paper.


\section{Preliminaries}\label{preliminaries}

The set of natural numbers $\NN$ is the set of all positive integers, 
so it does not contain 0.  

\subsection{Algebraic schemes}
A \emph{scheme} is defined in the sense of \cite[\S II.2]{Hart};
all the schemes in this paper are Noetherian.  
For an integral scheme $X$, we denote by $X^{\nor}$ the \emph{normalisation} of $X$.
For normalisation of schemes that may not be integral, 
see \cite[Definition~7.5.1]{Liuqing}.

Let $k$ be an arbitrary field.
By an \emph{algebraic scheme over $k$} (or a \emph{$k$-scheme}), 
we mean a scheme that is of finite type and separated over $\spec k$.
A \emph{variety} is a quasi-projective, reduced and irreducible $k$-scheme
for an algebraically closed field $k$ of characteristic zero;
when emphasising the quasi-projectivity, we also call it a \emph{quasi-projective variety}.


\subsection{Contractions}
 
A \emph{contraction} is a projective morphism of schemes $f\colon X\to Y$
such that $f_*\mc{O}_X = \mc{O}_Y$; $f$ is not necessarily birational (see \cite[\S 2.1]{B-Fano}).
In particular, $f$ has geometrically connected fibres.  Moreover, if $X$ is normal, then $Y$ is also normal.


\subsection{Divisors}\label{divisors-pre}
Let $X$ be a scheme.  A \emph{prime divisor} on $X$ is a reduced and irreducible closed subscheme of $X$
of codimension one.  By a \emph{divisor}, we mean a \emph{Weil divisor} on $X$, that is,
a finite $\ZZ$-linear combination of prime divisors on $X$, 
so a divisor is not necessarily Cartier.
For $\QQ$-linear equivalence (respectively, $\RR$-linear equivalence) of 
$\QQ$-divisors (respectively, of $\RR$-divisors), see \cite[\S 2.3]{B-Fano}.

Let $f\colon X\to Z$ be a morphism of schemes, and let $D$ be a nonzero $\RR$-divisor on $X$.
We say that $D$ is \emph{vertical$/Z$} if $f(\supp D)$ does not contain
generic point of any irreducible component of $Z$.
If $D$ does not have any vertical$/Z$ irreducible components,
we say that $D$ is \emph{horizontal$/Z$}.


\subsection{Pairs and singularities}
We will use standard notions and results from the Minimal Model Program 
\cite{Kollar92et-al, km98, BCHM, Kol_singularities_of_MMP, Fujino-book}.
Throughout this paper, a \emph{pair} $(X, B)$ consists of a normal quasi-projective variety $X$ and an
$\RR$-divisor $B\ge 0$ such that $K_X + B$ is $\RR$-Cartier.
For \emph{log canonical}, \emph{Kawamata log terminal} pairs (\emph{lc}, \emph{klt} pairs for short),
see \cite[\S 2.3]{km98}.  
A \emph{semi-log canonical pair} $(X, B)$ (or an \emph{slc} pair, to abbreviate) consists of 
a quasi-projective scheme $X$ of pure dimension
over an algebraically closed field of characteristic zero,
and an $\RR$-divisor $B\ge 0$ satisfying the following conditions:
\begin{itemize}
    \item $X$ is $S_2$ whose codimension one points are either regular points or nodes,
    \item no component of $\supp B$ is contained in the singular locus of $X$,
    \item $K_X + B$ is $\RR$-Cartier, and
    \item if $\pi\colon X^{\nor} \to X$ is the normalisation of $X$, and if $B^{\nor}$ is the sum of the birational transform of $B$ and the conductor divisor of $\pi$, then $(X^{\nor}, B^{\nor})$ is lc.
\end{itemize}
For more details about slc pairs, see \cite[Chap. 5]{Kol_singularities_of_MMP}.


\subsection{Arithmetic genus}

Let $k$ be a field, and let $C$ be a projective scheme over $\spec k$,
whose irreducible components are all of dimension one.
Denote by $\chi_k(\mc{O}_C)$ the Euler-Poincar\'e characteristic of $\mc{O}_C$ (over $k$).
The \emph{arithmetic genus} of $C$ is defined to be the integer $p_a(C) \coloneqq 1 - \chi_k(\mc{O}_C)$.


\subsection{Elements of arithmetic surfaces}\label{elements-ari-surfaces}

As clarifications, we collect some basic notions about arithmetic surfaces from \cite{Liuqing}.
A \emph{Dedekind scheme} is a normal Noetherian scheme of dimension one; cf. \cite[Definition~4.1.2]{Liuqing}.

Let $S$ be a Dedekind scheme.
We call an integral, projective, flat $S$-scheme $\pi\colon X\to S$ of dimension two a
\emph{fibred surface over $S$}.  We also call $X$ a \emph{projective flat $S$-curve}.  
We say that $\pi\colon X\to S$ is a \emph{normal} (respectively, \emph{regular}) \emph{fibred surface}
if $X$ is normal (respectively, regular).
A \emph{morphism} (respectively, a \emph{rational map}) between fibred surfaces over $S$
is a morphism (respectively, a rational map) that is compatible 
with the structure of $S$-schemes.  For more details, see \cite[\S 8.3]{Liuqing}.

Let $S$ be a Dedekind scheme.
An \emph{arithmetic surface} (over $S$) is a regular fibred surface $\pi\colon X\to S$; 
cf. \cite[Definition~8.3.14]{Liuqing}.
We say that an arithmetic surface $X\to S$ is \emph{relatively minimal}
if every birational morphism of arithmetic surfaces $X\to Y$ (over $S$)
is an isomorphism.  We say that $X\to S$ is \emph{minimal} if every birational 
map of arithmetic surfaces $Y\dashrightarrow X$ (over $S$) is a birational morphism.
For more details, see \cite[\S 9.3.2]{Liuqing}.
It is evident that a minimal arithmetic surface is relatively minimal.
The converse also holds if the generic fibre of the arithmetic surface has 
arithmetic genus $\ge 1$; see \cite[Corollary~9.3.24]{Liuqing}.


\section{Unbounded strong Stein degree}\label{pf-unbnd}

\subsection{Split multiplicative reduction of genus one curves}

Our proof of Theorem~\ref{exa-cor-d-n-thm-in-intro} is based on the following result
about reduction types of genus one curves from \cite{LLR04}.

\begin{proposition}[cf. \protect{\cite[\S 8]{LLR04}}]\label{ext-over-completion}
    Let $n, d\in \NN$ such that $d\mid n$ and $d\neq n$.
	Let $R$ be a complete discrete valuation ring
	with fraction field $K$ and residue field $k$ respectively. 
    Assume that $k$ admits a cyclic Galois extension $k'/k$ of degree $d$.
	Then there exists a relatively minimal arithmetic surface $X\to \spec R$ such that
	\begin{itemize}
		\item [\emph{(1)}] the generic fibre $X_K\to \spec K$ of $X\to \spec R$ is a geometrically integral, smooth, projective curve of arithmetic genus one, and
		\item [\emph{(2)}] the special fibre $X_k$ satisfies that
		      \begin{itemize}
                  \item [\emph{(2.a)}] $X_k$ is reduced, 
		          \item [\emph{(2.b)}] every irreducible component of $X_k$ is isomorphic to $\PP_{k'}^1$,
		          \item [\emph{(2.c)}] $X_k$ is a cycle of $\PP_{k'}^1$ consisting of $n/d$ irreducible components,
		          \item [\emph{(2.d)}] the residue field of every intersection point between irreducible components of $X_k$ is isomorphic to $k'$, and
                  \item [\emph{(2.e)}] the geometric fibre $X_{\bar{k}}$ is a reduced cycle of $n$ projective lines over $\bar{k}$, that is, a reduced and connected nodal curve of genus one, which has nonsingular rational irreducible components such that every irreducible component is connected to other two irreducible components at two distinct closed points.
		      \end{itemize}
	\end{itemize}
\end{proposition}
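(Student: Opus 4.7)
The plan is to construct $X$ as a Galois descent of a standard model over an unramified extension of $R$, and to read off the listed properties by \'etale descent.

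\emph{Step 1 (unramified lift).} Since $R$ is a complete discrete valuation ring and $k'/k$ is a finite separable Galois extension, there is a unique unramified Galois extension $R'/R$ of complete discrete valuation rings inducing $k'/k$ on residue fields, with Galois group canonically $G\coloneqq\galois(k'/k)$. Fix a generator $\sigma\in G$ and let $K'=\mathrm{Frac}(R')$.

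\emph{Step 2 (model over $R'$).} Over $R'$, take the relatively minimal regular model $Y\to\spec R'$ of the Tate curve $E_q/K'$ with parameter $q=\varpi^n$, where $\varpi\in R$ is a uniformizer (hence of $R'$ as well, since $R'/R$ is unramified). Classically $Y_{K'}$ is a geometrically integral, smooth, projective genus-one curve, and $Y_{k'}$ is a N\'eron $n$-gon of $\PP^1_{k'}$'s meeting transversally at $k'$-rational nodes; see \cite[\S 8]{LLR04}.

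\emph{Step 3 (twist).} Equip $Y$ with a $\sigma$-semilinear automorphism $\tau$ whose action on $Y_{k'}$ is cyclic rotation by $n/d$. Since $q\in R$, the Tate curve descends to a split $I_n$-model $Y_0/R$, and base change yields a natural $\sigma$-semilinear automorphism $\tau_0$ on $Y$ trivial on components; composing $\tau_0$ with translation on the Tate curve by the $d$-torsion section $\varpi^{n/d}\in E_q(K')$ gives $\tau$. A direct computation shows $\tau^d=\mathrm{id}$: $\tau_0$ commutes with the translation (the latter being defined over $R$), and translation iterated $d$ times is translation by $\varpi^n=q$, which is trivial modulo $q^{\ZZ}$. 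This is the construction in \cite[\S 8]{LLR04}.

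\emph{Step 4 (quotient and verification).} The action of $\langle\tau\rangle\cong G$ on $Y$ is free: on $Y_{K'}$ since $G$ acts freely on $\spec K'$; on $Y_{k'}$ since rotation by $n/d<n$ (using $d\neq n$) fixes no component and no node of the $n$-gon. Hence $Y\to X\coloneqq Y/G$ is an \'etale $G$-torsor and $Y\cong X\times_{\spec R}\spec R'$. It follows that $X$ is a regular, flat, projective scheme over $\spec R$ of relative dimension one, and is integral. Relative minimality of $X\to\spec R$ descends from that of $Y\to\spec R'$, since any exceptional curve on $X$ would pull back to a $G$-invariant exceptional curve on $Y$.

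The listed properties now follow by \'etale descent applied to $X=Y/G$:
\begin{itemize}
\item $(X_K)_{\bar K}=Y_{\bar K}$ is geometrically integral, smooth, projective of arithmetic genus one, giving property (1);
\item $X_k=Y_{k'}/G$ is a reduced cycle of $n/d$ irreducible components (properties (2.a), (2.c)); each such component is the quotient of a $G$-orbit of $d$ copies of $\PP^1_{k'}$ inside $Y_{k'}$, isomorphic as a $k$-scheme to $\PP^1_{k'}$ (property (2.b));
\item each of the $n/d$ intersection points of $X_k$ is the quotient of a free $G$-orbit of $d$ nodes of $Y_{k'}$, and standard Galois theory gives residue field $(k'\otimes_k k')^G\cong k'$ at each such point (property (2.d));
\item $X_{\bar k}=Y_{k'}\otimes_{k'}\bar k$ is the reduced $n$-gon of projective lines over $\bar k$ (property (2.e)).
\end{itemize}

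\emph{Main obstacle.} The delicate point is constructing the twisted descent datum $\tau$ on the regular model $Y$ — not merely on a singular degeneration or on the generic fiber — and verifying that the quotient $X$ remains regular and relatively minimal over $R$. This is precisely the content of the construction in \cite[\S 8]{LLR04}, which we invoke as a black box.
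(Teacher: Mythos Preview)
Your proposal is correct and follows essentially the same route as the paper: both arguments lift $k'/k$ to an unramified cyclic extension $K'/K$ of degree $d$ and then invoke the construction in \cite[\S 8]{LLR04} (Propositions~8.3, 8.7 and Example~8.8) to produce the desired twisted genus-one model. The paper simply cites these results, whereas you unpack the construction explicitly as a Tate curve over $R'$ twisted by translation by a $d$-torsion section and then descended; your final paragraph correctly identifies that the delicate verification (regularity and relative minimality of the quotient) is exactly what \cite[\S 8]{LLR04} supplies.

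One small remark: in Step~4 your justification of freeness on the special fibre is slightly misleading---once $\tau$ is a genuine $\sigma$-semilinear descent datum, \'etaleness of $Y\to X$ (and hence freeness on geometric points) is automatic from the \'etaleness of $\spec R'\to\spec R$, independent of the rotation; the condition $d\neq n$ is rather needed so that the resulting $X_K$ has no $K$-rational point (period $d>1$) and the special fibre has the cycle shape described, not for freeness. This does not affect the validity of the argument, since you defer the fine points to \cite{LLR04} anyway.
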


\begin{proof}
    As $k$ has a cyclic Galois extension $k'/k$ of degree $d$, by \cite[Theorem~3, page~70]{Lorenz-alg-II},
    there is a finite, Galois, and unramified extension $K'$ of $K$ of degree $d$ such that
    \begin{itemize}
        \item the residue field of $K'$ is $k'$, and
        \item the Galois group $\galois (K'/K)$ is canonically isomorphic to $\galois (k'/k)$.
    \end{itemize}
	Then the existence of $X\to \spec R$ 
    follows immediately from Proposition~8.3, Proposition~8.7, and Example~8.8 in \cite{LLR04}.
    Note that the cyclicity of $K'/K$ is required to apply \cite[Proposition~8.7]{LLR04}.
	(We remark that the irreducible components of the $X_k$ in \cite[Example~8.8]{LLR04} are not geometrically irreducible,
	which is a typo in \cite[Example~8.8]{LLR04}.)
    Moreover, by \cite[Propositions~8.3,~8.7]{LLR04}, $X_K/K$ is trivialised by $K'$ and its 
    Jacobian has split multiplicative reduction of type $I_n$,
    which shows that $X_{\bar{k}}$ is a reduced cycle of $n$ projective lines over $\bar{k}$.
\end{proof}


\subsection{Analyticity and Algebraicity}
By approximating the complete DVR in Proposition~\ref{ext-over-completion} in a suitable way,
we get the following algebraic version of Proposition~\ref{ext-over-completion}.

\begin{theorem}\label{exa-thm-d-n}
	Let $n, d\in \NN$ such that $d\mid n$ and $d\neq n$.
	Let $R$ be an excellent DVR whose residue field $k$ has a cyclic Galois 
    extension $k'/k$ of degree $d$.  Then there exist
	\begin{itemize}
		\item [\emph{(1)}] a discrete valuation ring $R'$ admitting an \'etale morphism $\spec R'\to \spec R$ such that $R$ and $R'$ have isomorphic residue fields (also denoted by $k$), and
		\item [\emph{(2)}] a normal scheme $X'$ of dimension 2 admitting a projective morphism 
              \[ g'\colon X'\to \spec R' \] 
              such that
        \begin{itemize}
            \item [\emph{(2.1)}] the generic fibre of $g'$ satisfies the property in Proposition~\ref{ext-over-completion} (1),
            \item [\emph{(2.2)}] the special fibre $X'_k$ of $g'$ satisfies all the properties in Proposition~\ref{ext-over-completion} (2),
            \item [\emph{(2.3)}] the relative dualising sheaf $\omega_{X'/\spec R'}$ of $g'$ is an invertible sheaf on $X'$, and
            \item [\emph{(2.4)}] the relative dualising sheaf $\omega_{X_k'/\spec k}$ of the special fibre $X_k'\to \spec k$ satisfies that $\omega_{X'_k/\spec k} \cong \omega_{X'/\spec R'}|_{X'_k}$.
        \end{itemize}
	\end{itemize}
\end{theorem}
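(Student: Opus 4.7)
The plan is to apply Proposition~\ref{ext-over-completion} to the completion $\hat R$ of $R$, producing an arithmetic surface $\hat\pi\colon \hat X\to \spec\hat R$, and then to algebraise $\hat X$ to a scheme over an étale neighbourhood of $R$ via Artin approximation. Since $\hat R$ is a complete DVR with residue field $k$ (admitting, by hypothesis, a cyclic Galois extension $k'/k$ of degree $d$), Proposition~\ref{ext-over-completion} yields a relatively minimal regular arithmetic surface $\hat X\to \spec\hat R$ whose generic fibre is geometrically integral, smooth, projective of arithmetic genus one, and whose special fibre is the reduced nodal cycle of $n/d$ copies of $\PP^1_{k'}$ described in (2.a)--(2.e). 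Because $\hat X$ is regular and all fibres of $\hat\pi$ are Gorenstein (the generic fibre being smooth, the special fibre nodal), the morphism $\hat\pi$ is flat Gorenstein, so $\omega_{\hat X/\hat R}$ is invertible and commutes with arbitrary base change.

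The next step is to descend $\hat X$ to an étale neighbourhood of $R$. Since $\hat\pi$ is projective, $\hat X$ is cut out by finitely many homogeneous polynomial equations in some $\PP^N_{\hat R}$. Artin approximation, in its algebraisation form for projective schemes (applicable because $R$ is excellent), then produces an étale morphism $\spec R'\to \spec R$ inducing an isomorphism on residue fields, together with a projective morphism $g'\colon X'\to \spec R'$, such that there is an isomorphism of $\hat R'$-schemes
\[ X'\times_{R'}\hat R' \;\cong\; \hat X\times_{\hat R}\hat R', \]
where $\hat R'$ denotes the completion of $R'$ (canonically isomorphic to $\hat R$ via the common residue field $k$). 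In particular the special fibre $X'_k$ is canonically identified with $\hat X_k$, so conditions (2.a)--(2.e) of Proposition~\ref{ext-over-completion} (2) hold verbatim for $X'_k$, giving (2.2).

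The remaining properties follow by faithfully flat descent along the excellent completion $R'\to \hat R'$. Excellence makes $\spec\hat R'\to \spec R'$ a regular morphism, so regularity of $X'\times_{R'}\hat R'$ (inherited from $\hat X$) descends to regularity of $X'$, giving normality and dimension two. For (2.1), geometric integrality, smoothness, projectivity, and arithmetic genus one of the generic fibre descend from $X'\times_{R'}\hat R'$ to $X'$ under the faithfully flat extension of function fields. For (2.3) and (2.4), regularity of $X'$ and of $\spec R'$ imply that $g'$ is flat Gorenstein, so $\omega_{X'/R'}$ is invertible and its formation commutes with base change, yielding $\omega_{X'_k/k}\cong \omega_{X'/R'}|_{X'_k}$.

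The main obstacle is the algebraisation step: one must extract an honest étale (not merely henselian) neighbourhood $R'/R$ together with a finite-type scheme $X'/R'$ approximating the a priori purely formal $\hat X/\hat R$. Projectivity of $\hat\pi$ is essential here, as it lets one encode $\hat X$ by finitely many polynomial data to which Artin approximation directly applies; once this is in hand, the verification of normality, of the generic and special fibre descriptions, and of the dualising sheaf compatibilities is routine via the standard descent and base change behaviour along flat Gorenstein morphisms.
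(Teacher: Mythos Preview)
Your overall strategy---apply Proposition~\ref{ext-over-completion} over $\hat R$ and then algebraise via Artin approximation---matches the paper's, but the algebraisation step as you state it is too strong and constitutes a genuine gap. Artin approximation does \emph{not} produce an \'etale neighbourhood $R'$ together with $X'/R'$ satisfying $X'\times_{R'}\hat R'\cong \hat X\times_{\hat R}\hat R'$. What it gives (applied, say, to the classifying map $\hat\varphi\colon\spec\hat R\to\hilb_\Phi(\PP^N_S/S)$) is, for any fixed $c$, a map $\varphi'\colon\spec R'\to\hilb_\Phi$ with $\varphi'\equiv\hat\varphi\pmod{\mfm_R^c}$. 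In general $\varphi'$ does \emph{not} agree with $\hat\varphi$ after completion; the analogue already fails for $\AAA^1_R$, since $R^h\subsetneq\hat R$. Consequently your faithfully-flat-descent arguments for regularity of $X'$ and for smoothness of the generic fibre do not go through: you have no identification of $X'\times_{R'}\hat R'$ with a regular scheme.

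The paper confronts exactly this difficulty. Agreement modulo $\mfm_R$ is enough to pin down the special fibre, so (2.2) is fine. For (2.1), however, one must rule out that $\varphi'$ sends the generic point of $\spec R'$ into the locus $U_{\sing}\subset\hilb_\Phi$ of singular fibres. The paper does this by choosing generators $\alpha_j$ of the ideal of $U_{\sing}$, observing via Krull intersection that their images in $\hat R$ lie outside $\mfm_R^c\hat R$ for some $c\gg 0$, and then approximating modulo that specific $c$; this forces the $\alpha_j$ to have nonzero image in $R'$, hence $\varphi'(\eta_{S'})\notin U_{\sing}$. For normality and (2.3)--(2.4), the paper does not descend regularity from $\hat X$ at all: instead it notes that the resulting $X'\to\spec R'$ is a semi-stable curve (smooth generic fibre, nodal special fibre), and invokes \cite[Proposition 10.3.15]{Liuqing} to conclude that $X'$ is normal and a local complete intersection, whence $\omega_{X'/R'}$ is invertible and compatible with base change. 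Your sketch would be repaired by replacing the claimed completion isomorphism with this more careful congruence argument and the semi-stable curve input.
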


\begin{proof}
    \emph{Step 1}.
    Denote by $\mf{m}_R$ the maximal ideal of $R$.
    Let $\wh{R}$ be the $\mf{m}_R$-adic completion of $R$, which is a complete DVR.
	Notice that the canonical homomorphism $R\to \wh{R}$ induces an isomorphism of residue fields,
	hence by abusing of notation, we denote by $k$ the residue fields of $R$ and $\wh{R}$.
    Denote by $S$ the Dedekind scheme $\spec R$, and by $\wh{S}$ the scheme $\spec \wh{R}$.
    By assumption, $k'/k$ is a cyclic Galois extension of degree $d$.
	Then by Proposition~\ref{ext-over-completion}, there exists a relatively minimal arithmetic surface
	\[ \hat{g}\colon \wh{X}\to \wh{S} \]
    satisfying all the properties listed in Proposition~\ref{ext-over-completion}.  
    As $\hat{g}\colon \wh{X}\to \wh{S}$ is projective, 
    there exists a closed immersion of $\wh{S}$-schemes
    \[ \wh{X}\to \PP_{\wh{S}}^N \] 
    for some $N\in \NN$ sufficiently large.
	Denote by $\Phi$ the Hilbert polynomial of fibres of $\hat{g}$ with respect to 
	the embedding $\wh{X}\to \PP_{\wh{S}}^N$.
	Denote by $\hilb_{\Phi}(\PP^N_S/S)$ the Hilbert scheme of subschemes of $\PP^N_S$ over $S$
    with Hilbert polynomial $\Phi$.  Then $\hilb_{\Phi}(\PP^N_S/S)$ is a projective $S$-scheme; cf. 
	\cite[Chap. 5]{FGA-explained} and \cite[Theorem~I.1.4]{Kol-rc}.  Denote by 
    \[ g\colon \Sigma\to \hilb_{\Phi}(\PP^N_S/S)\]  
    the universal family over $\hilb_{\Phi}(\PP^N_S/S)$.
	Then there exists a unique $S$-morphism 
    \[ \wh{\varphi}\colon \wh{S}\to \hilb_{\Phi}(\PP^N_S/S) \] 
    such that $\hat{g}\colon \wh{X}\to \wh{S}$ is the base change of $g$ via $\wh{\varphi}$.
	\[\xymatrix{
	\wh{X}\ar[d]_{\hat{g}}\ar[r] & \Sigma\ar[d]^g\ar[r] & \PP^N_S\ar[d] \\
	\wh{S}\ar[r]^-{\wh{\varphi}} & \hilb_{\Phi}(\PP^N_S/S)\ar[r] & S  
	}\]
	Denote by $\wh{H}$ the scheme-theoretic image of $\wh{\varphi}$, 
	which is an integral closed subscheme of $\hilb_{\Phi}(\PP^N_S/S)$.  Let 
    \[ \ol{\varphi}\colon \wh{S}\to \wh{H} \] 
    be the induced $\hilb_{\Phi}(\PP^N_S/S)$-morphism, which is dominant.  

    \medskip

    \emph{Step 2}.
    Denote by $\hat{s}$ the closed point of $\wh{S}$, and by $\hat{q} \coloneqq \ol{\varphi}(\hat{s})$.
    Let $\wh{\Sigma}$ be the base change of $\Sigma$ via $\wh{H}\to \hilb_{\Phi}(\PP^N_S/S)$.
    Then the generic fibre of $\wh{\Sigma}\to \wh{H}$ is a geometrically integral, smooth, projective curve 
    of arithmetic genus one as the generic fibre of $\hat{g}$ is so.
    
    Let $U\subseteq \wht{H}$ be an affine open subscheme containing $\hat{q}$
    such that every fibre of $\wh{\Sigma}\to \wh{H}$ over $U$ is a geometrically reduced,
    geometrically connected, projective curve of arithmetic genus one;
    see \cite[Th\'eor\`eme~(12.2.4)]{EGA-IV-3}.
    We can assume that $U = \spec A$, where $A$ is a finitely generated $R$-algebra.
    It is evident that $\ol{\varphi}$ factors through the inclusion $U\to \wht{H}$.  Denote by 
    \[ \varphi_U\colon \wht{S}\to U \] 
    the induced morphism of $S$-schemes.

    Denote by $\Sigma_U$ the base change of $\wh{\Sigma}$ via $U\to \wh{H}$.
    Let $U_{\sing}$ be the maximal reduced closed subscheme of $U$
    over which every fibre of $\Sigma_U\to U$ is singular.  As $U$ is affine, 
    we can assume that $U_{\sing} = V(I)$ for an ideal $I\subset A$.
    Note that the generic fibre of $\Sigma_U\to U$ is smooth, hence $U_{\sing}$ is a proper closed subset of $U$.
    Shrinking $U$ near $\hat{q}$, we can assume that every irreducible component of $U_{\sing}$ contains $\hat{q}$.
    
    Let $\Set{\alpha_1,\dots,\alpha_r}$ be a set of generators of the ideal $I$.
    As $\varphi_U\colon \wh{S}\to U$ is dominant, we see that the corresponding ring homomorphism
    \[ \varphi_U^{\#}\colon A\to \wh{R} \]
    is injective.  In particular, the image $\beta_j \coloneqq \varphi_U^{\#}(\alpha_j)\in \wh{R}$ of every $\alpha_j$ is nonzero.
    Let $\mf p$ be the prime ideal of $A$ corresponding to $\hat{q}\in U$, that is,
    \[ \mf p = (\varphi_U^{\#})^{-1}(\mf m_R \wh{R}). \]
    Since every irreducible component of $U_{\sing}$ contains $\hat{q}$,
    we see that $\alpha_j\in \mf p$ for every $1\le j\le r$.
    Thus, every $\beta_j$ is a nonzero element of $\mf{m}_R\wh{R}$.
    By Krull Intersection Theorem (see \cite[Theorem~8.10]{CA-Mat}), we have
    \[ \bigcap_{a\in \NN} \mf{m}_R^a\wh{R} = (0). \]
    Then there exists a natural number $c\gg 0$ such that $\beta_j\not\in \mf{m}_R^c\wh{R}$
    for every $1\le j\le r$.
    
	Since $U$ is of finite type over $S$, by \cite[Corollary~(2.5)]{Artin},
	there exist an \'etale neighbourhood $S'\to S$ of the closed point of $S$ 
    (see \cite[page 27]{Artin}) and an $S$-morphism 
	\[ \varphi'\colon S' \to U \]
	such that
    \begin{equation}
        \varphi_U \equiv \varphi' \Mod{\mf{m}_R^c}.\label{equiv-mor} \tag{$\star$}
    \end{equation}
    Here we have applied the assumption that $R$ is excellent as this is required in \cite{Artin}.
    We can assume that $S'$ has a single closed point, hence  
    we can write $S' = \spec R'$ for a discrete valuation ring $R'$.  
    Moreover, by the definition of \'etale neighbourhoods \cite[page 27]{Artin}, 
    the residue field of $R'$ is isomorphic to $k$.
    By \cite[page 23]{Artin}, \eqref{equiv-mor} means that the composite morphisms of schemes
    \[ \wh{S}\times_S \spec (R/\mfm_R^c)\to \wh{S}\xrightarrow{\varphi_U} U \,\text{ and }\, 
    S'\times_S\spec (R/\mfm_R^c)\to S'\xrightarrow{\varphi'} U \]
    are identical.  Denote by 
    \[ (\varphi')^{\#}\colon A\to R' \] 
    the ring homomorphism corresponding to the $S$-morphism $\varphi'$.  
    Then \eqref{equiv-mor} is also equivalent to saying that the composite ring homomorphisms
    \[ A\xrightarrow{\varphi_U^{\#}} \wh{R} \to \wh{R}/\mf{m}_R^c\wh{R}\,\text{ and }\,
    A\xrightarrow{(\varphi')^{\#}} R'\to R'/\mf{m}_R^c R' \]
    are identical.

    Since the image of every $\beta_j$ in $\wh{R}/\mf{m}_R^c\wh{R}$ is nonzero,
    we see that the image of every $\alpha_j$ in $R'/\mf{m}_R^c R'$ is also nonzero.
    Hence for every $1\le j\le r$, the image $\gamma_j \coloneqq (\varphi')^{\#}(\alpha_j)\in R'$ is nonzero.
    If the image of $\varphi'$ is contained in $U_{\sing}$,
    then $\varphi'$ factors through the closed immersion $U_{\sing}\to U$.
    In this case, every $\gamma_j$ is zero in $R'$.  Thus, we can conclude that 
    $\varphi'(\eta_{S'})\in U\setminus U_{\sing}$, where $\eta_{S'}$ is the generic point of $S'$.

    \medskip

    \emph{Step 3}.  
    Denote by $g'\colon X'\to S'$ the base change of $\Sigma_U\to U$
    via the $S$-morphism $\varphi'\colon S'\to U$.
    By Step 2, the generic fibre of $g'$ is a geometrically integral, smooth, projective curve
    of arithmetic genus one.
    As $g'$ is flat, by \cite[Proposition~4.3.8]{Liuqing}, $X'$ is an irreducible and reduced scheme.
    Hence $g'\colon X'\to S'$ is a fibred surface over $S'$.

    By Step 2, $g'$ and $\hat{g}$ have the same special fibre over $k$.
    By Proposition~\ref{ext-over-completion}, the geometric fibre $X'_{\bar{k}}$ is a reduced cycle of 
    $\PP_{\bar{k}}^1$.  Thus, $X'_k$ is a semi-stable curve over $k$; see \cite[Definition~10.3.2]{Liuqing}.
    Then $g'\colon X'\to S'$ is a semi-stable curve over $S'$; see \cite[Definition~10.3.14]{Liuqing}.
    Since $S'$ is a Dedekind scheme, and since the generic fibre of $g'$ is smooth,
    by \cite[Proposition~10.3.15]{Liuqing}, we see that $X'$ is a normal scheme and that
    $g'\colon X'\to S'$ is a local complete intersection.
    By \cite[\S 6.4.2]{Liuqing} and \cite[Theorem~6.4.32]{Liuqing}, the relative dualising sheaf $\omega_{X'/S'}$ exists, 
    which is an invertible sheaf on $X'$.  Moreover, as $g'$ is flat, \cite[Theorem~6.4.9]{Liuqing} shows that
    \[ \omega_{X'_k/\spec k} \cong \omega_{X'/S'}|_{X'_k}, \]
    where $\omega_{X'_k/\spec k}$ is the relative dualising sheaf 
    of the projective morphism $X'_k\to \spec k$.
    Therefore, the normal fibred surface $g'\colon X'\to S'$ satisfies all the properties as desired.  
\end{proof}


\subsection{Proof of Theorem~\ref{exa-cor-d-n-thm-in-intro}}

By spreading out the family of genus one curves obtained in Theorem~\ref{exa-thm-d-n},
we get a quasi-projective version of Theorem~\ref{exa-cor-d-n-thm-in-intro}.

\begin{theorem}\label{exa-cor-d-n}
    Let $\mbb K$ be an algebraically closed field of characteristic zero.
    Let $n, d\in \NN$ such that $d\mid n$ and $d\neq n$.
    There exists a log Calabi-Yau fibration $(X, B)\to Z$ such that
    \begin{itemize}
        \item $X$ is a normal quasi-projective threefold over $\mbb K$,
        \item $Z$ is a smooth quasi-projective surface over $\mbb K$,
        \item a general fibre of $X\to Z$ is a nonsingular genus one curve over $\mbb K$,
        \item $B$ is a reduced Cartier divisor on $X$ with $n/d$ nonsingular irreducible components,
        \item there is a smooth curve $C$ on $Z$ that is the image of every irreducible component of $B$ in $Z$, and
        \item for every irreducible component $S$ of $B$, $\sdeg (S/C) = d$.
    \end{itemize}
\end{theorem}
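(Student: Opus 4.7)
The plan is to globalise the local construction of Theorem~\ref{exa-thm-d-n} by spreading out, producing a genus-one fibration over a smooth surface whose vertical divisors realise the required Stein degree. The main obstacle, as I see it, is organising the spreading-out so that all the structural properties of the special fibre in Theorem~\ref{exa-thm-d-n} propagate uniformly from the generic point of the chosen curve to a Zariski neighborhood, while simultaneously keeping the total space normal and each component of the vertical divisor smooth.

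First I would choose the base. Take $Z_0 = \AAA^2_{\mbb K} = \spec \mbb K[x,y]$ with smooth divisor $C_0 = V(y)$, so that $R := \mcO_{Z_0, C_0} \cong \mbb K[x,y]_{(y)}$ is an excellent DVR with residue field $k = \mbb K(x)$. Since $\mbb K$ is algebraically closed of characteristic zero, the Kummer extension $k(x^{1/d})/k$ is cyclic Galois of degree $d$, so Theorem~\ref{exa-thm-d-n} applies and yields an étale morphism $\spec R' \to \spec R$ with trivial residue extension and a normal projective $g' \colon X' \to \spec R'$ with the listed properties. By standard spreading-out for finitely presented étale morphisms and for finitely presented projective morphisms, I would then, after shrinking $Z_0$ around the generic point of $C_0$, produce an étale morphism $Z_1 \to Z_0$ with smooth $Z_1$ and a distinguished smooth component $C_1 \subset Z_1$ dominating $C_0$ satisfying $\mcO_{Z_1, C_1} \cong R'$, and extend $g'$ to a projective flat $\pi_1 \colon \mcX_1 \to Z_1$ defined on a Zariski neighborhood of the generic point of $C_1$ whose base change to $\spec R'$ recovers $g'$.

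Next I would shrink $Z_1$ to an open subset $Z$ meeting $C_1$ in a nonempty open $C$, so that $X := \pi_1^{-1}(Z)$ is a normal quasi-projective threefold, $\pi := \pi_1|_X$ is projective flat with smooth genus-one curves as fibres over $Z\setminus C$, and the scheme-theoretic preimage $B := \pi^{\ast} C = \pi^{-1}(C)$ is a reduced Cartier divisor with $n/d$ irreducible components $S_1, \dots, S_{n/d}$ matching the generic combinatorial structure guaranteed by Proposition~\ref{ext-over-completion}. Each $S_i$ will have generic fibre $\PP^1_{k'}$ over $C$, where $k' := \mbb K(C_1)$ is a degree $d$ extension of $k = \mbb K(C)$. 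By a final shrinking (smoothness of $S_i$ being open on the base), I may assume each $S_i$ is smooth. All of these are standard invocations of semicontinuity and openness for the relevant properties.

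Finally I would verify the log Calabi-Yau property and compute Stein degrees. Every fibre of $\pi$ has trivial dualising sheaf (a smooth genus one curve, or a Gorenstein cycle of $\PP^1$'s of arithmetic genus one, whose $\omega$ is trivial by adjunction), so property (2.4) of Theorem~\ref{exa-thm-d-n} together with cohomology and base change gives $\omega_{X/Z} \cong \pi^{\ast} L$ for some line bundle $L$ on $Z$. Hence $K_X + B = K_{X/Z} + \pi^{\ast}(K_Z + C) \sim \pi^{\ast}(L + K_Z + C) \sim 0/Z$, so $K_X + B \sim_{\RR} 0/Z$ and $(X, B) \to Z$ is a log Calabi-Yau fibration. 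A local computation near an intersection $S_i \cap S_{i+1}$, which is étale-locally of the form $xy = f$ with $f$ a local equation of $C$, shows that $X$ is smooth and $B$ has simple normal crossings there, whence $(X, B)$ is lc. For the Stein degree, the algebraic closure of $\mbb K(C)$ in $\mbb K(S_i) \cong k'(t)$ is precisely $k'$, so the Stein factorisation $S_i \to V_i \to C$ has $V_i$ equal to the normalisation of $C$ in $k'$, a degree $d$ cover of $C$, giving $\sdeg(S_i/C) = d$ as required.
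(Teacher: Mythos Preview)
Your strategy matches the paper's: localise $\AAA^2_{\mbb K}$ at a line to obtain an excellent DVR with residue field $\mbb K(t)$, apply Theorem~\ref{exa-thm-d-n}, spread out, and shrink. Your handling of the spreading-out, the relative Calabi--Yau condition $K_X+B\sim 0/Z$, and the Stein degree computation are all essentially what the paper does (the paper builds $Z$ as an open subset of the normalisation of $\AAA^2_{\mbb K}$ in $\operatorname{Frac}R'$, which is equivalent to your spreading-out of the \'etale morphism $\spec R'\to\spec R$).

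The one real gap is your argument that $(X,B)$ is lc. You assert that near $S_i\cap S_{i+1}$ the threefold $X$ is \'etale-locally $xy=f$ with $f$ a local equation of $C$, hence smooth with $B$ simple normal crossing. But Theorem~\ref{exa-thm-d-n} only guarantees that $X'$ is \emph{normal}, not regular. At a node of the special fibre the completed local ring of $X'$ has the form $\wh{R'}[[u,v]]/(uv-\pi^e)$ for some $e\ge 1$, and this two-dimensional ring is normal for every $e$; nothing in the statement of Theorem~\ref{exa-thm-d-n} forces $e=1$. After spreading out you therefore only know that $X$ is \'etale-locally $xy=g$ with $C\subseteq V(g)$, and if $g\in (f^2)$ the threefold $X$ is genuinely singular along $S_i\cap S_{i+1}$, so the ``local computation'' you invoke does not go through as stated. (One could revisit the Artin approximation inside the proof of Theorem~\ref{exa-thm-d-n} and argue that choosing $c\ge 2$ preserves regularity from the arithmetic surface $\wh{X}$, but that is extra work not contained in the statement you cite.)

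The paper avoids this issue by never claiming $X$ is smooth. It instead verifies directly that $B$ is slc: since $B\to C$ is a split semi-stable curve whose generic fibre is already nodal, de~Jong's local description \cite[2.23]{dejong-alteration} forces $\wh{\mcO}_{B,b}\cong\wh{\mcO}_{C,c}[[u,v]]/(uv)$ at every singular point of $B$. Thus $B$ has only double normal crossing singularities, adjunction reads $(K_X+B)|_B=K_B$, and inversion of adjunction gives that $(X,B)$ is lc. This argument is insensitive to the thickness $e$.
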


\begin{proof}
    \emph{Step 1}.
    Let $L$ be a line in the affine surface $\AAA^2_{\mbb K}$.
    Denote by $\eta$ the generic point of $L$, and by
    $R$ the localisation ring of $\AAA^2_{\mbb K}$ at $\eta$, which is a DVR.  
    Denote by $k$ the residue field of $R$, then $k$ is isomorphic to 
    $\mbb K (t)$, where $t$ is a transcendental variable over $\mbb K$.
    Since $\mbb K$ is algebraically closed, it contains a primitive 
    $d$-th root of unity.  Then $k \cong \mbb K (t)$ admits a cyclic Galois 
    extension $k'/k$ of degree $d$; cf. \cite[Proposition~36, page~625]{Dummit-F-3rd-algebra}.
    Thus, by Theorem~\ref{exa-thm-d-n}, there exist a discrete valuation ring $R'$, 
    an \'etale morphism $\spec R'\to \spec R$, and
    a normal fibred surface 
    \[ g'\colon X'\to \spec R' \]
    such that all the properties in Theorem~\ref{exa-thm-d-n} are satisfied.
    Note that $k$ is also the residue field of $R'$,
    and $X'_k$ is a reduced cycle of $\PP_{k'}^1$ consisting of $n/d$ irreducible components.

    Let $K'$ be the fraction field of $R'$.  Let 
    \[ \mu\colon Z'\to \AAA^2_{\mbb K} \] 
    be the normalisation of $\AAA^2_{\mbb K}$ in $K'$,
    which is a finite morphism of normal varieties; see \cite[Propositions~4.1.25,~4.1.27]{Liuqing}.
    Particularly, by \cite[Corollaire~(18.12.4)]{EGA-IV-4}, $\mu$ is proper.
    As $K(Z') \cong K'$, the valuative criterion of properness shows that there is a unique morphism    
    $\nu\colon \spec R' \to Z'$ fitting into the commutative diagram
    \[\xymatrix{
    \spec K'\ar[rr]\ar[d] & & Z'\ar[d]^{\mu} \\
    \spec R'\ar[r]\ar[rru]^{\nu} & \spec R\ar[r] & \AAA_{\mbb K}^2
    }\]
    where the top horizontal arrow is induced by $K(Z')\cong K'$.
    Denote by $\eta'$ the image of the generic point of $\spec R'$ under $\nu$.
    As $\mu$ is finite, we see that $\eta'$ is a codimension one point of $Z'$.
    By \cite[Lemma~3.3.24]{Liuqing}, $R'\cong \mc{O}_{Z', \eta'}$. 
    Then there exist 
    \begin{itemize}
        \item an affine, nonsingular, open subvariety $Z\subseteq Z'$ containing $\eta'$, and 
        \item a normal variety $X$ equipped with a flat, projective morphism 
              \[ f\colon X\to Z \] 
              such that (cf. \cite[Proposition~(7.8.6)]{EGA-IV-II})
        \[ X'\cong X\times_Z \spec \mc{O}_{Z', \eta'}.  \]
    \end{itemize}
    In particular, $X$ is a normal quasi-projective threefold over $\mbb K$.

    \medskip

    \emph{Step 2}.
    Denote by $C$ the closure of $\eta'$ in $Z$, which is a nonsingular curve over $\mbb K$ 
    (up to shrinking $Z$ near $\eta'$).  Let $B$ be the fibre product $X\times_Z C$.
    Then by \cite[Proposition~4.3.8]{Liuqing}, $B$ is a reduced Cartier divisor on $X$.  
    Note that the generic fibre of $B\to C$ is equal to the special fibre of $g'\colon X'\to \spec R'$.
    As every irreducible component of $X'_k$ is
    isomorphic to $\PP_{k'}^1$, every irreducible component of the generic fibre of $B\to C$ is nonsingular.
    Thus, up to shrinking $Z$ near $\eta'$, we can assume that $B$
    has $n/d$ nonsingular irreducible components.
    It is evident that $B\sim 0/Z$.

    By Theorem~\ref{exa-thm-d-n}, the geometric generic fibre of $B\to C$ is a cycle of projective lines.
    Then by \cite[II, Proposition~1.5]{Deligne-Rapoport}, shrinking $Z$ near $\eta'$, 
    we can assume that every closed fibre of $B\to C$ is a cycle of projective lines.
    We claim that $B$ has slc singularities.  To this end, notice that
    $B\to C$ is a split semi-stable curve over $C$ in the sense of \cite[2.21, 2.22]{dejong-alteration}.
    Pick an arbitrary singular closed point $b\in B$, and let $c\in C$ be its image.
    By \cite[2.23]{dejong-alteration}, we have
    \[ \wh{\mcO}_{B,b}\cong \wh{\mcO}_{C,c}[[u,v]]/(uv - h) \]
    for some $h\in \wh{\mcO}_{C,c}$.  As the generic fibre of $B\to C$ is singular, we see that $h = 0$.
    Hence $b$ is a double normal crossing point of $B$.  
    Thus, $(B, 0)$ is an slc pair; see \cite[\S 5.2]{Kol_singularities_of_MMP}
    and \cite[\S 1.4]{Kollar-g-type-book}.

    \medskip

    \emph{Step 3}.
    By Theorem~\ref{exa-thm-d-n}, $\omega_{X'/\spec R'}$ is an invertible sheaf on $X'$.
    Thus, we can also assume that $\omega_{X/Z}$ is an invertible sheaf.
    As $Z$ is affine and nonsingular, we can conclude that $K_X\sim K_{X/Z}$ is a Cartier divisor.
    Moreover, by \cite[II, Proposition~1.6]{Deligne-Rapoport}, 
    we see that $K_X \sim 0/Z$.  Therefore, we get
    \[  K_X + B \sim 0/Z. \]
    By adjunction (see \cite[Proposition~4.5]{Kol_singularities_of_MMP}), we can write
    \[ (K_X + B)|_B = K_B. \]
    As $B$ is slc, inversion of adjunction shows that $(X, B)$ is lc; 
    see \cite[Theorem~4.9, Definition–Lemma~5.10]{Kol_singularities_of_MMP}.
    Then $(X, B)\to Z$ is a log Calabi-Yau fibration.
    Pick an arbitrary irreducible component $S$ of $B$.
    Since the generic fibre of $S\to C$
    is isomorphic to $\PP_{k'}^1$ (as schemes over $\spec k$), the geometric generic fibre
    of $S\to C$ is isomorphic to a disjoint union of $d$
    projective lines over $\bar k$, which shows that $\sdeg (S/C) = d$.
\end{proof}


By taking log canonical projective compactification of the log Calabi-Yau fibration 
in Theorem~\ref{exa-cor-d-n} and running an MMP,
we get the proof of Theorem~\ref{exa-cor-d-n-thm-in-intro}.

\begin{theorem}[=Theorem~\ref{exa-cor-d-n-thm-in-intro}]\label{proj-exa-cor-d-n}
    Let $\mbb K$ be an algebraically closed field of characteristic zero.
    Let $n, d\in \NN$ such that $d\mid n$ and $d\neq n$.
    There exists a log Calabi-Yau fibration $(X, B)\to Z$ such that
    \begin{itemize}
        \item $X$ is a normal projective threefold over $\mbb K$,
        \item $Z$ is a normal projective surface over $\mbb K$,
        \item a general fibre of $X\to Z$ is a nonsingular genus one curve over $\mbb K$, and
        \item there is an irreducible curve $C$ on $Z$ such that 
        \begin{itemize}
            \item $B$ has $n/d$ horizontal$/C$ irreducible components,
            \item every horizontal$/C$ irreducible component of $B$ has coefficient one in $B$, and
            \item for every horizontal$/C$ irreducible component $S$ of $B$, $\sdeg (S^{\nor}/C) = d$, where $S^{\nor}$ is the normalisation of $S$.
        \end{itemize}
    \end{itemize}
\end{theorem}

\begin{proof}
    For the given natural numbers $d,n$,
    let $(X^{\circ}, B^{\circ}) \to Z^{\circ}$ be a log Calabi-Yau fibration 
    satisfying all the properties listed in Theorem~\ref{exa-cor-d-n}.
    Let $C^{\circ}\subset Z^{\circ}$ be the smooth curve that is the image of every irreducible component of $B^{\circ}$.
    
    Let $\ol{Z}$ be a projective compactification of $Z^{\circ}$.
    As $Z^{\circ}$ is nonsingular, by taking resolution, 
    we can assume that $\ol{Z}$ is a nonsingular projective surface.
    By \cite[Corollary~1.2]{Hacon-Xu-closure}, there exist a projective morphism $\ol{X}\to \ol{Z}$
    and an lc pair $(\ol{X}, \ol{B})$ such that $X^{\circ}$ is an open subset of $\ol{X}$, 
    $X^{\circ} = \ol{X}\times_{\ol{Z}}Z^{\circ}$, and $B^{\circ} = \ol{B}|_{X^{\circ}}$.
    Taking the Stein factorisation of $\ol{X}\to \ol{Z}$, we can also assume that
    $\ol{X}\to \ol{Z}$ is a contraction of normal projective varieties.
    
    It is evident that $K_{\ol{X}} + \ol{B}$ is pseudo-effective over $\ol{Z}$.
    As $\dim \ol{X} = 3$, we can run $\phi\colon \ol{X} \dashrightarrow X$, a $(K_{\ol{X}} + \ol{B})$-MMP over $\ol{Z}$;
    see \cite{Kollar92et-al, km98, Fujino-book}.
    Denote by $B$ the pushdown of $\ol{B}$ to $X$. 
    Then $(X, B)$ is also an lc pair, and $K_X + B$ is nef$/\ol{Z}$.
    Moreover, by the proof of Theorem~\ref{exa-cor-d-n}, we see that
    $K_{X^{\circ}} + B^{\circ} \sim 0/Z^{\circ}$.  Hence by the cone theorem
    (see \cite{km98, Fujino-book}), $\phi$ is an isomorphism over $Z^{\circ}$.
    
    By the log abundance theorem for threefolds (see \cite{Kollar92et-al, KMM94, Fujino-book}),
    $K_X + B$ is semi-ample over $\ol{Z}$.  Hence there are contractions of normal projective varieties 
    $f\colon X\to Z$ and $Z\to \ol{Z}$ such that $f$ is a $\ol{Z}$-morphism, and 
    $K_X + B = f^*A$ for an ample$/\ol{Z}$ divisor $A$ on $Z$.
    Then $(X, B)\to Z$ is a log Calabi-Yau fibration.
    \[\xymatrix{
    (X^{\circ}, B^{\circ})\ar[r]\ar[d] & (\ol{X}, \ol{B})\ar@{-->}[r]^-{\phi}\ar[d] & (X, B)\ar[d]^f \\
    Z^{\circ}\ar[r] & \ol{Z} & Z\ar[l]
    }\]
    As $\phi$ is an isomorphism over $Z^{\circ}$, $K_X + B$ is numerically trivial over the open subset $Z^{\circ}$.
    Thus, we see that $Z\to \ol{Z}$ is birational.
    Denote by $\ol{C}$ the closure of $C^{\circ}$ in $\ol{Z}$.
    As the image $E\subset \ol{Z}$ of exceptional locus of $Z\to \ol{Z}$ has codimension $2$ in $\ol{Z}$,
    $\ol{C}$ is not contained in $E$.
    Denote by $C$ the birational transform of $\ol{C}$ to $Z$, which is an irreducible curve on $Z$.
    Since $\phi$ is an isomorphism over $Z^{\circ}$, and since $Z\to \ol{Z}$
    is an isomorphism over the generic point of $C^{\circ}$,
    we can conclude that $B$ has $n/d$ horizontal$/C$ irreducible components,
    and every such irreducible component has coefficient one in $B$.
    Moreover, for every horizontal$/C$ irreducible component $S$ of $B$,
    as $\sdeg (S^{\nor}/C)$ is equal to the number of irreducible components
    of general fibres of $S\to C$ (see \cite[\S 2]{birkar-Q-25}), we see that $\sdeg (S^{\nor}/C) = d$.
\end{proof}


\medskip

\printbibliography

@book{Kollar92et-al,
     editor = {Koll\'ar, J\'anos},
     title = {Flips and abundance for algebraic threefolds - {A} summer seminar at the {University} of {Utah} {(Salt} {Lake} {City,} 1991)},
     series = {Ast\'erisque},
     publisher = {Soci\'et\'e math\'ematique de France},
     number = {211},
     year = {1992},
     zbl = {0782.00075},
     url = {https://www.numdam.org/item/AST_1992__211__1_0/}
}

@book{Kollar-g-type-book,
 author = {Koll{\'a}r, J{\'a}nos},
 title = {Families of varieties of general type. {With} the collaboration of {Klaus} {Altmann} and {S{\'a}ndor} {J}. {Kov{\'a}cs}},
 fseries = {Cambridge Tracts in Mathematics},
 series = {Camb. Tracts Math.},
 issn = {0950-6284},
 volume = {231},
 isbn = {978-1-00-934610-8},
 year = {2023},
 publisher = {Cambridge University Press},
 doi = {10.1017/9781009346115},
 keywords = {14-02,14J10},
 zbMATH = {7658187}
}

@article{KMM94,
 author = {Keel, Sean and Matsuki, Kenji and McKernan, James},
 title = {Log abundance theorem for threefolds},
 fjournal = {Duke Mathematical Journal},
 journal = {Duke Math. J.},
 issn = {0012-7094},
 volume = {75},
 number = {1},
 pages = {99--119},
 year = {1994},
 doi = {10.1215/S0012-7094-94-07504-2},
 keywords = {14E30,14J30,14C20},
 zbMATH = {706708},
 Zbl = {0818.14007}
}

@book{Fujino-book,
 author = {Fujino, Osamu},
 title = {Foundations of the minimal model program},
 fseries = {MSJ Memoirs},
 series = {MSJ Mem.},
 issn = {2189-1494},
 volume = {35},
 isbn = {978-4-86497-045-7},
 year = {2017},
 publisher = {Mathematical Society of Japan, Tokyo},
 doi = {10.2969/msjmemoirs/035010000},
 keywords = {14E30,14F17},
 zbMATH = {6726987},
 Zbl = {1386.14072}
}

@article{Hacon-Xu-closure,
 author = {Hacon, Christopher D. and Xu, Chenyang},
 title = {Existence of log canonical closures},
 fjournal = {Inventiones Mathematicae},
 journal = {Invent. Math.},
 issn = {0020-9910},
 volume = {192},
 number = {1},
 pages = {161--195},
 year = {2013},
 doi = {10.1007/s00222-012-0409-0},
 keywords = {14E30,14E05},
 zbMATH = {6160859},
 Zbl = {1282.14027}
}

@Article{Artin,
 Author = {Artin, M.},
 Title = {Algebraic approximation of structures over complete local rings},
 FJournal = {Publications Math{\'e}matiques},
 Journal = {Publ. Math., Inst. Hautes {\'E}tud. Sci.},
 ISSN = {0073-8301},
 Volume = {36},
 Pages = {23--58},
 Year = {1969},
 DOI = {10.1007/BF02684596},
 zbMATH = {3289079},
 Zbl = {0181.48802}
}

@article{BCHM,
 author = {Birkar, Caucher and Cascini, Paolo and Hacon, Christopher D. and McKernan, James},
 title = {Existence of minimal models for varieties of log general type},
 fjournal = {Journal of the American Mathematical Society},
 journal = {J. Am. Math. Soc.},
 issn = {0894-0347},
 volume = {23},
 number = {2},
 pages = {405--468},
 year = {2010},
 doi = {10.1090/S0894-0347-09-00649-3},
 keywords = {14E30,14J45,14J15,14B05,14J40},
 zbMATH = {5775673},
 Zbl = {1210.14019}
}

@article{B-Fano,
 author = {Birkar, Caucher},
 title = {Anti-pluricanonical systems on {Fano} varieties},
 fjournal = {Annals of Mathematics. Second Series},
 journal = {Ann. Math.},
 issn = {0003-486X},
 volume = {190},
 number = {2},
 pages = {345--463},
 year = {2019},
 doi = {10.4007/annals.2019.190.2.1},
 keywords = {14J45,14E30,14C20,14E05},
 zbMATH = {7107180},
 Zbl = {1470.14078}
}

@misc{B-moduli,
      title={Moduli of algebraic varieties}, 
      author={Caucher Birkar},
      year={2022},
      eprint={2211.11237},
      archivePrefix={arXiv},
      primaryClass={math.AG},
      url={https://arxiv.org/abs/2211.11237}, 
}

@misc{birkar-Q-25,
      title={Stein degree on log Calabi-Yau fibrations}, 
      author={Caucher Birkar and Santai Qu},
      year={2025},
      eprint={2509.20948},
      archivePrefix={arXiv},
      primaryClass={math.AG},
      url={https://arxiv.org/abs/2509.20948}, 
}

@article{B-Zhang,
 author = {Birkar, Caucher and Zhang, De-Qi},
 title = {Effectivity of {Iitaka} fibrations and pluricanonical systems of polarized pairs},
 fjournal = {Publications Math{\'e}matiques},
 journal = {Publ. Math., Inst. Hautes {\'E}tud. Sci.},
 issn = {0073-8301},
 volume = {123},
 pages = {283--331},
 year = {2016},
 doi = {10.1007/s10240-016-0080-x},
 keywords = {14E30,14E05},
 zbMATH = {6600166},
 Zbl = {1348.14038}
}

@InCollection{Corti-3-folds,
 Author = {Corti, Alessio},
 Title = {3-fold flips after {Shokurov}},
 BookTitle = {Flips for 3-folds and 4-folds},
 ISBN = {978-0-19-857061-5},
 Pages = {18--48},
 Year = {2007},
 Publisher = {Oxford University Press, Oxford},
 DOI = {10.1093/acprof:oso/9780198570615.003.0002},
 Keywords = {14E30},
 zbMATH = {6293167},
 Zbl = {1286.14022}
}

@book{Dummit-F-3rd-algebra,
 author = {Dummit, David S. and Foote, Richard M.},
 title = {Abstract algebra},
 edition = {3rd ed.},
 isbn = {0-471-45234-3},
 year = {2004},
 publisher = {Wiley, Chichester},
 keywords = {00A05,20-01,12-01,13-01,16-01,15-01},
 zbMATH = {1970438},
 Zbl = {1037.00003}
}

@incollection {Deligne-Rapoport,
    AUTHOR = {Deligne, Pierre and Rapoport, Michael},
     TITLE = {Les sch\'emas de modules de courbes elliptiques},
 BOOKTITLE = {Modular functions of one variable, {II} ({P}roc. {I}nternat.
              {S}ummer {S}chool, {U}niv. {A}ntwerp, {A}ntwerp, 1972)},
    SERIES = {Lecture Notes in Math.},
    VOLUME = {Vol. 349},
     PAGES = {143--316},
 PUBLISHER = {Springer, Berlin-New York},
      YEAR = {1973},
   MRCLASS = {14K10 (10D05)},
  MRNUMBER = {337993},
MRREVIEWER = {T.\ Oda},
}

@book{FGA-explained,
 author = {Fantechi, Barbara and G{\"o}ttsche, Lothar and Illusie, Luc and Kleiman, Steven L. and Nitsure, Nitin and Vistoli, Angelo},
 title = {Fundamental algebraic geometry: {Grothendieck}'s {FGA} explained},
 fseries = {Mathematical Surveys and Monographs},
 series = {Math. Surv. Monogr.},
 issn = {0076-5376},
 volume = {123},
 isbn = {0-8218-3541-6},
 year = {2005},
 publisher = {American Mathematical Society (AMS), Providence, RI},
 keywords = {14-01,14F20,14C20,14D15,14K30,18D30},
 zbMATH = {2229020},
 Zbl = {1085.14001}
}

@article {EGA-IV-II,
    AUTHOR = {Grothendieck, Alexander},
     TITLE = {\'{E}l\'{e}ments de g\'{e}om\'{e}trie alg\'{e}brique. {IV}. \'{E}tude locale des
              sch\'{e}mas et des morphismes de sch\'{e}mas. {II}},
   Journal = {Publ. Math., Inst. Hautes {\'E}tud. Sci.},
  FJOURNAL = {Institut des Hautes \'{E}tudes Scientifiques. Publications
              Math\'{e}matiques},
    NUMBER = {24},
      YEAR = {1965},
     PAGES = {231},
      ISSN = {0073-8301},
   MRCLASS = {14.00},
  MRNUMBER = {199181},
MRREVIEWER = {H. Hironaka},
       URL = {http://www.numdam.org/item?id=PMIHES_1965__24__231_0},
}

@article {EGA-IV-3,
    AUTHOR = {Grothendieck, Alexander},
     TITLE = {\'El\'ements de g\'eom\'etrie alg\'ebrique. {IV}. \'Etude
              locale des sch\'emas et des morphismes de sch\'emas. {III}},
   Journal = {Publ. Math., Inst. Hautes {\'E}tud. Sci.},
  FJOURNAL = {Institut des Hautes \'Etudes Scientifiques. Publications
              Math\'ematiques},
    NUMBER = {28},
      YEAR = {1966},
     PAGES = {255},
      ISSN = {0073-8301,1618-1913},
   MRCLASS = {14.55},
  MRNUMBER = {217086},
MRREVIEWER = {J.\ P.\ Murre},
       URL = {http://www.numdam.org/item?id=PMIHES_1966__28__255_0},
}

@article {EGA-IV-4,
    AUTHOR = {Grothendieck, Alexander},
     TITLE = {\'El\'ements de g\'eom\'etrie alg\'ebrique. {IV}. \'Etude
              locale des sch\'emas et des morphismes de sch\'emas {IV}},
   Journal = {Publ. Math., Inst. Hautes {\'E}tud. Sci.},
  FJOURNAL = {Institut des Hautes \'Etudes Scientifiques. Publications
              Math\'ematiques},
    NUMBER = {32},
      YEAR = {1967},
     PAGES = {361},
      ISSN = {0073-8301,1618-1913},
   MRCLASS = {14.55},
  MRNUMBER = {238860},
MRREVIEWER = {J.\ P.\ Murre},
       URL = {http://www.numdam.org/item?id=PMIHES_1967__32__361_0},
}

@book {Hart,
    AUTHOR = {Hartshorne, Robin},
     TITLE = {Algebraic geometry},
    SERIES = {Graduate Texts in Mathematics},
    VOLUME = {No. 52},
 PUBLISHER = {Springer-Verlag, New York-Heidelberg},
      YEAR = {1977},
     PAGES = {xvi+496},
      ISBN = {0-387-90244-9},
   MRCLASS = {14-01},
  MRNUMBER = {463157},
MRREVIEWER = {Robert\ Speiser},
}

@article {dejong-alteration,
    AUTHOR = {{de~Jong}, Aise Johan},
     TITLE = {Smoothness, semi-stability and alterations},
   Journal = {Publ. Math., Inst. Hautes {\'E}tud. Sci.},
  FJOURNAL = {Institut des Hautes \'Etudes Scientifiques. Publications
              Math\'ematiques},
    NUMBER = {83},
      YEAR = {1996},
     PAGES = {51--93},
    label  = {dJ},
   MRCLASS = {14E15 (14B05 14H10)},
  MRNUMBER = {1423020},
MRREVIEWER = {Marko\ Roczen},
       URL = {http://www.numdam.org/item?id=PMIHES_1996__83__51_0},
}

@book {km98,
    AUTHOR = {Koll\'ar, J\'anos and Mori, Shigefumi},
     TITLE = {Birational geometry of algebraic varieties. {With} the collaboration of {C}. {H}. {Clemens} and {A}. {Corti}, {Translated} from the 1998 {Japanese} original},
    SERIES = {Cambridge Tracts in Mathematics},
    VOLUME = {134},
 PUBLISHER = {Cambridge University Press, Cambridge},
      YEAR = {1998},
     PAGES = {viii+254},
      ISBN = {0-521-63277-3},
   MRCLASS = {14E30},
  MRNUMBER = {1658959},
MRREVIEWER = {Mark\ Gross},
       DOI = {10.1017/CBO9780511662560},
       URL = {https://doi.org/10.1017/CBO9780511662560},
}

@book{Kol_singularities_of_MMP,
 author = {Koll{\'a}r, J{\'a}nos},
 title = {Singularities of the minimal model program. {With} the collaboration of {S{\'a}ndor} {Kov{\'a}cs}},
 fseries = {Cambridge Tracts in Mathematics},
 series = {Camb. Tracts Math.},
 issn = {0950-6284},
 volume = {200},
 year = {2013},
 publisher = {Cambridge University Press, Cambridge},
 doi = {10.1017/CBO9781139547895.002},
 keywords = {14E30,14-02,14B05,14J17,14J40},
 zbMATH = {6148846},
 Zbl = {1282.14028}
}

@book {Kol-rc,
    AUTHOR = {Koll\'ar, J\'anos},
     TITLE = {Rational curves on algebraic varieties},
    SERIES = {Ergebnisse der Mathematik und ihrer Grenzgebiete. 3. Folge. A
              Series of Modern Surveys in Mathematics [Results in
              Mathematics and Related Areas. 3rd Series. A Series of Modern
              Surveys in Mathematics]},
    VOLUME = {32},
 PUBLISHER = {Springer-Verlag, Berlin},
      YEAR = {1996},
     PAGES = {viii+320},
      ISBN = {3-540-60168-6},
   MRCLASS = {14-02 (14C05 14E05 14F17 14J45)},
  MRNUMBER = {1440180},
MRREVIEWER = {Yuri\ G.\ Prokhorov},
       DOI = {10.1007/978-3-662-03276-3},
       URL = {https://doi.org/10.1007/978-3-662-03276-3},
}

@book{Liuqing,
 author = {Liu, Qing},
 title = {Algebraic geometry and arithmetic curves. {Transl}. by {Reinie} {Ern{\'e}}},
 fseries = {Oxford Graduate Texts in Mathematics},
 series = {Oxf. Grad. Texts Math.},
 volume = {6},
 isbn = {0-19-920249-4},
 year = {2006},
 publisher = {Oxford University Press, Oxford},
 keywords = {14-01,14-02,14A15,14G40,14Exx},
 zbMATH = {5048200},
 Zbl = {1103.14001}
}

@article{LLR04,
 author = {Liu, Qing and Lorenzini, Dino and Raynaud, Michel},
 title = {N{\'e}ron models, {Lie} algebras, and reduction of curves of genus one},
 fjournal = {Inventiones Mathematicae},
 journal = {Invent. Math.},
 issn = {0020-9910},
 volume = {157},
 number = {3},
 pages = {455--518},
 year = {2004},
 doi = {10.1007/s00222-004-0342-y},
 keywords = {14H25,14H40},
 zbMATH = {2104977},
 Zbl = {1060.14037}
}

@Book{Lorenz-alg-II,
 Author = {Lorenz, Falko},
 Editor = {Levy, Silvio},
 Title = {Algebra. {Volume} {II}: {Fields} with structure, algebras and advanced topics. {Transl}. from the {German} by {Silvio} {Levy}. {With} the collaboration of the translator},
 FSeries = {Universitext},
 Series = {Universitext},
 ISSN = {0172-5939},
 ISBN = {978-0-387-72487-4},
 Year = {2008},
 Publisher = {Springer, New York},
 DOI = {10.1007/978-0-387-72488-1},
 Keywords = {12-01,13-01,16-01,20-01,11-01},
 zbMATH = {5209740},
 Zbl = {1130.12001}
}

@book{CA-Mat,
 author = {Matsumura, Hideyuki},
 title = {Commutative ring theory. {Transl}. from the {Japanese} by {M}. {Reid}.},
 edition = {Paperback ed.},
 fseries = {Cambridge Studies in Advanced Mathematics},
 series = {Camb. Stud. Adv. Math.},
 volume = {8},
 isbn = {0-521-36764-6},
 year = {1989},
 publisher = {Cambridge University Press, Cambridge etc.},
 keywords = {13-02,13Axx,13-01,14A05},
 zbMATH = {43569},
 Zbl = {0666.13002}
}
 
\vspace{1em}
 
\noindent\small{Caucher Birkar} 

\noindent\small{\textsc{Yau Mathematical Sciences Center, Tsinghua University, Beijing, China} }

\noindent\small{Email: \texttt{birkar@mail.tsinghua.edu.cn}}

\vspace{1em}
 
\noindent\small{Santai Qu} 

\noindent\small{\textsc{Institute of Geometry and Physics, University of Science and Technology of China, Hefei, Anhui Province, China} }

\noindent\small{Email: \texttt{santaiqu@ustc.edu.cn}}

\end{document}